\newcommand{\HH}{\mathbb{H}}
\newcommand{\PP}{\mathcal{P}}
\newcommand{\DD}{\mathcal{D}}
\newcommand{\CC}{\mathcal{C}}
\newcommand{\RR}{\mathbb{R}}
\newtheorem{theorem}{Theorem}[section]
\newtheorem{lem}{Lemma}[section]
\newtheorem{prop}{Proposition}[section]
\newtheorem{defi}{Definition}[section]
\begin{document}
\title[Delaunay Triangulations]{Embedded Delaunay triangulations for point clouds of surfaces in $\mathbb{R}^3$}
\author{Franco Vargas Pallete}
\thanks{Research partially supported by NSF grant DMS-1406301 and by the Minerva Research Foundation}
\address{School of Mathematics  \\
 Institute for Advanced Study \\
114 MOS, 1 Einstein Drive \\
Princeton, NJ 08540\\
U.S.A.}
\email{franco@math.ias.edu}
\maketitle

\begin{abstract}
    In the following article we discuss Delaunay triangulations for a point cloud on an embedded surface in $\mathbb{R}^3$. We give sufficient conditions on the point cloud to show that the diagonal switch algorithm finds an embedded Delaunay triangulation.
\end{abstract}

\section{Introduction}

Computational geometry is a topic of interest since, for instance, discusses discrete versions of classic geometric results (see \cite{GLSWI}, \cite{GLSWII} where the authors address the Poincar\'e-Koebe uniformization theorem, or \cite{Luo} for a combinatorial version of the Yamabe flow) and allows algorithmic shape comparison techniques (see for instance \cite{KoehlHass} where genus zero surfaces are compared). Initial conditions that guarantee numerical stability are clearly important in this topic, as well as algorithms to refine and detect good data samples. Some common good condition are the no existence of \textit{very-thin} triangles (each angle should be greater than a predetermined constant) and, more strongly, that every triangle is $\epsilon$-acute (meaning that the angles lie in the interval $(\epsilon,\pi/2-\epsilon)$). This condition in particular implies that two angles sharing their opposite edge add less than $\pi$, which is known as the (strict) \textit{Delaunay condition}, a handy property for a triangulation in computational geometry. Our ability to consistently produce $\epsilon$-acute data will also require the ability to produce (strict) Delaunay data. Delaunay triangulations are also useful in Teichm\"{u}ller theory, since they give a standard decomposition for flat surfaces with singularities (see \cite{MasurSmillie}).

In this article we show that the diagonal switch algorithm (which is known in the literature as extrinsic edge flip, appearing for instance in \cite{Dyer}, \cite{Renka} for this type of problem) for a point cloud of a $C^1$ surface $\Sigma\hookrightarrow\mathbb{R}^3$ finalizes after a finite number of steps realizing a embedded Delaunay triangulation, provided that the point cloud is sufficiently dense and stills reflects the $C^1$ structure. If the point cloud is in generic position (no $4$ coplanar points) then the triangulation is strict Delaunay. We also provide an example where the final triangulation is Delaunay but contains very-thin triangles in a proportion that gets arbitrarily close to $1$.

The article is organized as follows. Section \ref{sec:Plane} states the results for (abstract) flat surfaces, possibly with cone singularities. Section \ref{sec:R3} deals with surfaces in $\mathbb{R}^3$. The algorithm is shown to end by using area and hyperbolic volume (Proposition \ref{prop:alg}), while embeddedness is shown by first proving a local result (Proposition \ref{localDelaunay}) and then a local-to-global description (Theorem \ref{thm:global}). Finally, we discuss the embedded Delaunay example with arbitrary predominance of very-thin triangles.

\textbf{Acknowledgements:} I would like to thank Joel Hass for his comments and encouragement while working in this project. I would like also to thank the Discrete Geometry group at UC Davis, organized by Joel Hass and Patrice Koehl, for being so welcoming and introducing me to the subject.  I would also like to thank Feng Luo for pointing out the question about pointy triangles, as well as to the anonymous referee that pointed out the counterexample to it (Example 1) and further constructive comments.

\section{Delaunay Triangulation in the Plane}\label{sec:Plane}

\begin{defi}
Given flat triangles ABD and BCD with disjoint interiors, we say that the common edge BD satisfies the Delaunay condition (or for short that is a Delaunay edge) if the opposite angles satisfy $\angle BAD + \angle BCD \leq \pi$. If the inequality is strict, we say that the edge satisfies the strict Delaunay condition.
\end{defi}

\begin{defi} We say that a triangulation is Delaunay (resp. strict Delaunay) if for any two adjacent triangles the common edge is Delaunay (resp. strict Delaunay)
\end{defi}

\begin{lem}\label{EdgeDel}
For any ABCD flat quadrilateral at least one of the diagonal divides it in two disjoint triangles satisfying the Delaunay condition, as detailed in the following cases
\begin{enumerate}
	\item If ABCD is convex but not cyclic, the diagonal between the opposite angles that add less than $\pi$ is strict Delaunay.
	\item If ABCD is cyclic (and of course also convex) both diagonal are Delaunay but not strict.
	\item If ABCD is concave, the unique interior edge is strict Delaunay.
\end{enumerate}
\end{lem}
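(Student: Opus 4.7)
The plan is to treat the three cases separately, using in each the elementary angle-sum identity for the quadrilateral together with (in cases 1 and 2) the inscribed angle theorem. The unifying observation in the convex cases is that the diagonals $AC$ and $BD$ partition the quadrilateral into two triangles whose vertex angles opposite the diagonal are precisely the two interior angles of the quadrilateral \emph{not} at the endpoints of that diagonal. So the Delaunay condition on $AC$ reads $\angle B + \angle D \leq \pi$, and on $BD$ reads $\angle A + \angle C \leq \pi$.

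For the convex cases (1) and (2), since the interior angles of a convex quadrilateral sum to $2\pi$, the two opposite-angle sums $\angle A + \angle C$ and $\angle B + \angle D$ must themselves sum to $2\pi$. Combined with the classical characterization that a convex quadrilateral is cyclic iff $\angle A + \angle C = \pi$, case (2) is immediate: when $ABCD$ is cyclic both sums equal $\pi$, so both diagonals are Delaunay but neither is strict. In case (1), non-cyclicity forces $\angle A + \angle C \neq \pi$, so exactly one of the two sums is strictly less than $\pi$ and the other strictly greater; the diagonal separating the two vertices with the smaller sum is then the unique strict Delaunay diagonal, matching the statement of (1).

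For the concave case (3), the key geometric step is to argue that the (unique) reflex vertex, say $D$, lies in the interior of the triangle $\triangle ABC$ formed by the other three vertices, so that $BD$ is the unique interior diagonal while $AC$ passes through the exterior of the polygon. Once this is in place, the rays $AD$ and $CD$ lie strictly inside the angles $\angle BAC$ and $\angle BCA$ respectively, yielding $\angle BAD < \angle BAC$ and $\angle BCD < \angle BCA$. Adding and using that the angles of $\triangle ABC$ sum to $\pi$, one obtains $\angle BAD + \angle BCD < \pi - \angle ABC < \pi$, which is the strict Delaunay condition on $BD$.

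The only mildly non-trivial inputs are the inscribed-angle characterization of cyclic quadrilaterals used in (1)--(2) and the standard fact about reflex vertices used in (3); both are classical and I would invoke them without reproof. The main conceptual hurdle I expect is only book-keeping: correctly identifying, in each case, which diagonal corresponds to which pair of opposite angles, and making sure that \textquotedblleft strict\textquotedblright{} is preserved in the right inequality.
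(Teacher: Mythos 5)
Your proof is correct; the paper states Lemma~\ref{EdgeDel} without proof (treating it as standard), so there is no paper argument to compare against, but your argument cleanly fills the gap. The convex cases rest on the angle-sum identity $(\angle A+\angle C)+(\angle B+\angle D)=2\pi$, the characterization of cyclic quadrilaterals by $\angle A+\angle C=\pi$, and the correct bookkeeping that the Delaunay condition on diagonal $BD$ involves the angles at $A$ and $C$ (and symmetrically for $AC$), which you get right. The concave case correctly reduces to the reflex vertex $D$ lying in the interior of $\triangle ABC$, from which $\angle BAD+\angle BCD<\angle BAC+\angle BCA=\pi-\angle ABC<\pi$ follows, and you also correctly observe that $BD$ is then the only diagonal contained in the quadrilateral, so ``at least one'' is actually ``exactly one'' here. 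One remark on approach: the surrounding text of the paper handles Delaunay edges via Rivin's hyperbolic dictionary (lifting each triangle to the ideal tetrahedron spanned with $\infty$ in $\mathbb{H}^3$, where the Delaunay condition becomes convexity of the dihedral angle along the shared ideal edge); one could derive this lemma from that dictionary, since a convex/cyclic/concave base quadrilateral corresponds to a strictly convex/flat/reflex roof, but your purely Euclidean angle-chase is more elementary and entirely adequate for the stated case analysis.
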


The following lemma discusses existence and uniqueness of Delaunay triangulations following the results of \cite{Rivin}.

\begin{lem}
Given a pair $(P,X)$ where P is a flat polygon with vertices $V_1, V_2, \ldots, V_n$ and $X$ is a set of points $X_1, X_2,\ldots, X_k$ in the interior of $P$, there exists a Delaunay triangular subdivision of $P$ with vertices $V = \lbrace V_1, \ldots, V_n, X_1,\ldots, X_k \rbrace$. Moreover, any two such Delaunay triangular subdivisions are equivalent by interchanging diagonals (diagonal switch) in a cyclic polygon. 
\end{lem}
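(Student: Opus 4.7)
My strategy is to prove existence via the diagonal switch algorithm and derive uniqueness (up to cyclic flips) from a variational characterization of Delaunay triangulations, following \cite{Rivin}.

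For existence, I would start with any triangulation $T_0$ of $P$ with vertex set $V$; such a triangulation exists by standard arguments (for instance, ear-clipping extended to handle the interior points). While $T_0$ contains an edge that violates the Delaunay condition, Lemma \ref{EdgeDel}(1) guarantees that the two adjacent triangles form a convex quadrilateral and that the opposite diagonal is strictly Delaunay. Switching to that diagonal produces a new triangulation of $(P, V)$, and iterating yields a sequence $T_0, T_1, T_2, \ldots$ of triangulations on the fixed vertex set $V$.

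The heart of the argument is termination. Following Rivin, I would introduce the functional
\[ E(T) = \sum_{\Delta \in T} \operatorname{Vol}(\Delta^{\uparrow}), \]
where $\Delta^{\uparrow} \subset \HH^3$ is the ideal tetrahedron over $\Delta$ (with the fourth vertex at $\infty$). A Schläfli-type variation shows that replacing a non-Delaunay edge by the opposite diagonal of its quadrilateral strictly increases $E$, while switching a diagonal inside a cyclic quadrilateral leaves $E$ unchanged. Since the set of triangulations of $(P, V)$ with the fixed vertex set $V$ is finite, the algorithm halts after finitely many steps at a triangulation whose edges all satisfy the Delaunay condition.

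For uniqueness, suppose $T_1$ and $T_2$ are both Delaunay. Both are global maximizers of $E$ over the finite set of triangulations of $(P, V)$. Combining the strict monotonicity above with the known connectedness of the flip graph of $(P, V)$, any path of flips joining $T_1$ to $T_2$ can be deformed to one whose flips all preserve $E$, and by Lemma \ref{EdgeDel}(2) each such flip is a cyclic flip. The main obstacle I anticipate is the monotonicity computation itself, since it requires either the explicit formula for the volume of an ideal hyperbolic tetrahedron together with its Schläfli variation, or equivalently Lawson's more elementary lexicographic min-angle argument; both establish the same local dichotomy, namely that strict Delaunay is a strict maximum of $E$ on a single flip and cyclic is a non-strict maximum.
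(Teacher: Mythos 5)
Your existence and termination argument mirrors the paper's: both lift the picture to $\HH^3$ with the points on $\partial\HH^3$ and $\infty$ as apex, observe that a diagonal switch at a non-Delaunay edge strictly increases the total ideal-tetrahedron volume, and conclude by finiteness of triangulations on a fixed vertex set. That part is fine.

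Your uniqueness argument, however, diverges from the paper and has two gaps as written. First, you assert that both $T_1$ and $T_2$ are \emph{global} maximizers of $E$, but your monotonicity discussion only shows each is a \emph{local} maximum (no single flip increases $E$). To promote local to global one needs an extra input; the paper supplies exactly this by identifying the common upper envelope: the Delaunay triangulation is a subdivision of the projection of the non-vertical faces of $C(V,P)$, the convex hull of $V$ in $\HH^3$ relative to $P$. Every triangulation's tetrahedra lie below this envelope, giving the global bound and showing all Delaunay triangulations achieve it. Second, the step ``any path of flips joining $T_1$ to $T_2$ can be deformed to one whose flips all preserve $E$'' is stated without justification, and it is not an automatic consequence of flip-graph connectedness together with monotonicity; this is a nontrivial claim about the structure of the flip graph near its maxima. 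The convex-hull picture sidesteps it entirely: two Delaunay triangulations differ only in how they triangulate the non-triangular (hence cyclic) faces of $\partial C(V,P)$, and any two triangulations of a convex cyclic polygon are connected by diagonal flips within it. I would recommend replacing your flip-graph deformation argument with the relative convex hull characterization, which both proves the global optimality you need and yields uniqueness-up-to-cyclic-flips in one stroke.
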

\begin{proof}
As in \cite{Rivin}, take the plane as the boundary of the upper half-space model for $\HH^3$. For each triangle in the triangulation, consider the ideal triangle in $\HH^3$ span by its vertices and the ideal tetrahedra they span with $\infty$. Then an edge satisfies the Delaunay condition if and only if the ideal triangles have on top a convex dihedral angle, since the dihedral angle is equal to the sum of opposite angles. 

Given any triangular subdivision of $P$ we can take an edge that does not satisfy the Delaunay condition and proceed with a diagonal switch. Note that by the analysis in Lemma \ref{EdgeDel} the diagonal switch is always possible. This not only will give us a new Delaunay edge but also strictly increases the sum of volume of the tetrahedra. Since there are finitely many triangular subdivisions of $P$, this algorithm will end in a finite number of steps at a Delaunay triangulation. 

Notice then that a Delaunay triangulation is determined by $C(V, P)$, the convex hull of $V$ in $\mathbb{H}^3$ relative to $P$. This set is the minimal set containing $V$ so that if for $x,y\in C(V, P)$ the geodesic segment $\gamma$ joining them in $\mathbb{H}^3$ projects in $\mathbb{R}^2\subset\partial\mathbb{H}^3$ as a subset of $P$, then $\gamma \subset C(V,P)$. This relative convex hull is an ideal polyhedra and the Delaunay triangulation is a subdivision of the projection of the non-vertical faces. Different Delaunay triangulations come from having to subdivide non-triangular faces.
\end{proof}

The relative convex hull description is useful since it can help to understand how to triangulation evolve as we take sequences of point sets. This will be used on our final example.

The construction of Delaunay triangulations can be generalized to triangulations of flat surfaces with cone singularities, where the vertex set and the singularity set coincide. A proof of the following lemma can be found for instance in \cite{MasurSmillie} using Voronoi cells. 

\begin{lem} For every triangle in a 2 dimensional Delaunay triangulation of a flat manifold with cone singularities, the interior of its circumscribed disk does not contain any vertex. If a triangle $ABC$ is such that the closed circumscribed disk contains only $A, B, C$, then it is part of any Delaunay triangulation.
\end{lem}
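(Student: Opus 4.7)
The plan is to derive both assertions from the dual relationship between Delaunay triangulations and Voronoi decompositions. For each vertex (equivalently, cone singularity) $v$ of the flat manifold, define the Voronoi cell
\[
V(v) = \{\, x : d(x,v) < d(x,v') \text{ for every vertex } v' \neq v \,\},
\]
where $d$ is the flat geodesic distance. These are open sets whose boundaries are piecewise perpendicular bisectors between pairs of vertices, and the Delaunay triangulation is dual to this decomposition: a Voronoi vertex $P$ equidistant from three or more vertices and strictly closer to them than to any other corresponds to a face of the triangulation whose vertices are the nearest ones to $P$, and $P$ is the circumcenter of that face.

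For the first statement, take a triangle $ABC$ of a Delaunay triangulation and let $P$ be the dual Voronoi vertex. Then $d(P,A)=d(P,B)=d(P,C)=r$, where $r$ is the circumradius, while $d(P,V)\geq r$ for every other vertex $V$. The open geodesic disk $\{x : d(x,P) < r\}$ is exactly the interior of the circumscribed disk of $ABC$, and by construction it contains no vertex.

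For the second statement, suppose the closed circumscribed disk of some triangle $ABC$ contains no vertex other than $A, B, C$, and let $P$ be the circumcenter with circumradius $r$. Then $d(P,A)=d(P,B)=d(P,C)=r$ and $d(P,V)>r$ for every other vertex $V$, so $P$ is a Voronoi vertex where precisely the closures of $V(A), V(B), V(C)$ meet. By the duality, the dual face at $P$ in any Delaunay triangulation must be the triangle $ABC$ itself, proving the claim.

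The main technical obstacle is setting up the Voronoi decomposition rigorously on a flat cone surface. One has to verify that the cells are locally finite, that the bisectors are piecewise geodesic (which requires controlling how minimizing geodesics behave near cone points), and that the circumscribed disk of an embedded triangle, obtained by developing the triangle into the Euclidean plane, really coincides with the geodesic ball of radius $r$ around a well-defined circumcenter on the surface. Once these foundations are in place, the duality argument is essentially formal, and the details can be imported from the treatment of Voronoi cells in \cite{MasurSmillie}.
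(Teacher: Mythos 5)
The paper does not prove this lemma itself; it explicitly defers to \cite{MasurSmillie}, which uses the Voronoi cell decomposition, and your sketch follows exactly that route (Voronoi/Delaunay duality, with the technical points about bisectors, local finiteness, and the identification of the circumscribed disk with the geodesic ball correctly flagged as what must be checked). So your approach matches the one the paper cites, and the outline is sound.
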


Another property of a Delaunay switch in the Euclidean plane (from a triangulation $T$ to a triangulation $T'$ is that triangles do not get more \textit{pointy} More precisely, the smallest angle in $T$ is greater than or equal to the smallest angle in $T'$. Since the only angles that change are the ones around the diagonal switch, the non-decreasing property of the smallest angle can be seen after the following two observations:

\begin{itemize}
    \item The smallest angle of the 2 triangles adjacent to a non-strict Delaunay edge can only be adjacent to the diagonal.
    \\ \noindent
    Indeed, If $\alpha, \beta$ are the angles facing the non-strict Delaunay diagonal, then $\alpha+\beta\geq \pi$. But since the sum of angles in a triangle is $\pi$, each angle adjacent to the diagonal is strictly less than $\alpha$ or $\beta$.
    \item For each angle adjacent to the diagonal in $T"$ there is a angle adjacent to the diagonal in $T$ that is not bigger. If the new diagonal is strict Delaunay then the comparison angles are strictly bigger. Angles facing the diagonal in $T'$ are sum of angles in $T$.\\ \noindent
    Notice that for $4$ points $A, B, C, D$ (with no $3$ colinear points), $D$ is not in the interior of the circumcircle of $A, B, C$ if and only if $\angle ADB \leq \angle  ACB$, with equality if and only if $D$ belongs to the incircle. But this can be rephrase as changing into the Delaunay diagonal $AC$, where the mentioned angles are the adjacent angles to the diagonals.
\end{itemize}

We will in Examples 1 and 2 that such analogy does not quite work for surfaces in $\mathbb{R}^3$.

\section{Delaunay Triangulations for embedded surfaces in $\mathbb{R}^3$}\label{sec:R3}

Let us observe first that a diagonal switch changes one non-Delaunay diagonal for a strict Delaunay diagonal. Namely, if for $A, B, C, D \in \mathbb{R}^3$ the common edge $BD$ of the adjacent triangles $ABD$ and $BCD$ is non-Delaunay ($\angle BAD + \angle BCD > \pi$) then the common edge $AC$ of the adjacent triangles $ABC$ and $ACD$ is strict Delaunay ($\angle ABC + \angle ADC < \pi$). Indeed, by angular triangle inequality $\angle BAC + \angle CAD \geq \angle BAD$ and $\angle BCA + \angle ACD \geq \angle BCD$, so then

\[\angle ABC + \angle ADC = \pi -(\angle BAC + \angle BCA) + \pi -(\angle CAD + \angle ACD) \leq  2\pi - \angle BAD - \angle BCD < \pi\]

Notice that if $A, B, C, D$ are not coplanar then the angular triangle inequality is strict, so then doing a Delaunay switch when $\angle BAD + \angle BCD = \pi$ still implies that $AC$ is strict Delaunay. This is different from the flat case, where any diagonal of a cyclic quadrilateral gives an angle sum of $\pi$.

One point for us to address is to show that the process ends in a finite number of steps, producing a Delaunay triangulation. Inspired by Rivin's argument in [\cite{Rivin}, Section 6] using hyperbolic volumes, we make use of the following lemma (also proven in [\cite{Dyer},Theorem 6.5]).

\begin{lem}\label{lem:area}
Let $A, B, C, D \in \mathbb{R}^3$ such that $\angle BAD + \angle BCD \geq \pi$ and denote by $|\cdot|$ the area of a flat region in $\mathbb{R}^3$. Then $|ABC|+|ADC| \leq |ABD|+|BCD|$, with equality if and only if the 4 points $A, B , C, D$ are coplanar and form a convex quadrilateral.
\end{lem}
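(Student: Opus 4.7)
My plan is to reduce to a planar special case via unfolding along $BD$, and then establish the $3$D inequality by showing $|ABC|+|ACD|$ is non-increasing as a function of a fold angle $\phi$ out of the plane.

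\emph{Planar subcase.} For coplanar $A,B,C,D$ satisfying $\angle BAD + \angle BCD \geq \pi$, Lemma \ref{EdgeDel} rules out reflex vertices at $B$ or $D$ (these would make $BD$ strict Delaunay, contradicting the hypothesis). So either $ABCD$ is convex, in which case both triangulations give sums equal to the area of the quadrilateral, or it is concave with reflex vertex at $A$ or $C$, in which case $AC$ is the interior diagonal, $|ABC|+|ACD|$ equals the polygon area, and $|ABD|+|BCD|$ exceeds it by twice the area of the ``bite'' triangle ($\triangle BCD$ if $C$ is reflex, $\triangle ABD$ if $A$ is reflex). This yields the planar inequality, with equality exactly in the convex case.

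\emph{$3$D case.} Fix $\triangle ABD$ in the plane $z=0$ and parameterize the position of $C$ by $\phi \in [0,\pi]$ so that at $\phi=0$ the triangle $BCD$ is unfolded into the plane of $ABD$ with $\tilde C$ on the side of $BD$ opposite to $A$, and $\phi$ measures rotation of $\triangle BCD$ about $BD$ out of the plane. The quantities $|ABD|$, $|BCD|$, $\angle BAD$, $\angle BCD$ are $\phi$-independent. Writing $\beta=\angle ABD$, $\delta=\angle DBC$, the spherical cosine law gives $\cos(\angle ABC) = \cos\beta\cos\delta - \sin\beta\sin\delta\cos\phi$, and analogously at $D$. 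Combining this with $|ABC| = \tfrac12|AB||BC|\sin(\angle ABC)$ and the identities $|AB|\sin\beta = 2|ABD|/|BD|$, $|BC|\sin\delta = 2|BCD|/|BD|$, a short differentiation yields
\[
\frac{d}{d\phi}\bigl(|ABC|+|ACD|\bigr) \;=\; -\frac{2\,|ABD|\,|BCD|\sin\phi}{|BD|^{2}}\bigl(\cot(\angle ABC) + \cot(\angle ADC)\bigr).
\]
From the same spherical cosine formula both $\angle ABC$ and $\angle ADC$ are strictly decreasing in $\phi$, so their sum attains its maximum at $\phi=0$; angle bookkeeping in the planar unfolded quadrilateral $AB\tilde{C}D$ together with the hypothesis gives $\angle ABC + \angle ADC \leq \pi$ there, and via the identity $\cot\alpha+\cot\gamma = \sin(\alpha+\gamma)/(\sin\alpha\sin\gamma)$ this is precisely the required non-negativity of the cotangent sum. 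Hence $|ABC|+|ACD|$ is non-increasing in $\phi$, so $|ABC|+|ACD| \leq |AB\tilde C|+|A\tilde C D|$, and the planar subcase finishes the inequality. Strict negativity of the derivative on $(0,\pi)$ forces $\phi=0$ in the equality case, and the planar equality statement then pins $ABCD$ to a convex planar quadrilateral.

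The main obstacle I anticipate is the case analysis verifying $\angle ABC + \angle ADC \leq \pi$ at $\phi=0$: in the convex planar case it follows cleanly from the interior-angle sum of a quadrilateral being $2\pi$, but the concave subcases (reflex at $A$ or $C$) require a separate check using that a reflex interior angle already forces the two remaining interior angles to sum to less than $\pi$, together with care in identifying triangle angles versus interior angles of the (possibly non-convex) unfolded quadrilateral.
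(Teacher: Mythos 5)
Your proof is correct, and it takes a genuinely different route from the paper's. The paper holds the two \emph{new} triangles $ABC$, $ACD$ rigid and assembles them in a plane along $AC$ to form a quadrilateral $Q_1$; it then compares $Q_1$ with $Q_2$ (the flat assembly of $ABD$, $BCD$ along $BD$, which has the same four side lengths), using implicit differentiation of the law of cosines to show the area of a flat quadrilateral with fixed side lengths is increasing in the angle at $B$, and the spherical triangle inequality $\angle ABC \le \angle ABD + \angle DBC$ to show $Q_1$'s angle at $B$ is the smaller one. You instead hold the two \emph{old} triangles $ABD$, $BCD$ rigid and rotate one of them about $BD$ through a dihedral parameter $\phi$, showing $|ABC|+|ACD|$ is non-increasing in $\phi$ and equal to $|ABD|+|BCD|$ at the unfolded position $\phi=0$. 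These are complementary ``rigid pair'' choices: the paper does the deformation in the plane and pays for it with the spherical inequality, while you do it out of the plane and pay with the spherical law of cosines. Your choice buys a cleaner derivative formula (no implicit differentiation) and a transparent equality analysis, since $\phi$ is a single geometric parameter whose vanishing directly means coplanar-with-$C$-opposite-$A$; the paper's choice buys a slightly more elementary toolbox. Regarding the obstacle you flag: it is not actually there. Since $A$ and the unfolded $\tilde C$ lie on opposite sides of $BD$, the quadrilateral $AB\tilde CD$ is simple with interior angles at $A$ and $\tilde C$ equal to the triangle angles $\angle BAD<\pi$ and $\angle BCD<\pi$, so those two vertices can never be reflex, and a reflex angle at $B$ or $D$ would force $\angle BAD+\angle BCD<\pi$, contradicting the hypothesis; hence the unfolded quadrilateral at $\phi=0$ is automatically convex, only the convex bookkeeping is ever needed, and for the same reason your preliminary planar case analysis (and with it the appeal to Lemma \ref{EdgeDel}) is in fact absorbed by the $\phi\in[0,\pi]$ argument itself. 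One small caveat worth recording is that when the original coplanar configuration has $C$ on the same side of $BD$ as $A$ the quadrilateral $ABCD$ may be \emph{crossed} rather than concave, a case your explicit planar enumeration omits; but since your 3D argument reaches it as $\phi=\pi$, the proof is still complete.
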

\begin{proof}

Assume that the 4 points are not coplanar since the inequality is obvious otherwise. Denote $\alpha = \angle ABC, \beta = \angle ADC$ (see Figure \ref{fig:ABCD}). Note that $\alpha + \beta < \pi$. If we also denote by $a= \ell(AB), b=\ell(BC), c=\ell(CD), d=\ell(DA)$ then by cosine law we have
\[\ell (AC) = a^2+b^2 - 2ab\cos(\alpha) = c^2+d^2 - 2cd\cos(\beta)\]

\begin{figure}
\centering
\tikzset{every picture/.style={line width=0.75pt}} 

\begin{tikzpicture}[x=0.75pt,y=0.75pt,yscale=-1,xscale=1]

\draw  (342.5,195.8) -- (324,49) ;

\draw  [dash pattern={on 4.5pt off 4.5pt}]  (424,149) -- (250.5,144.8) ;

\draw    (342.5,195.8) -- (424,149) ;

\draw    (324,49) -- (250.5,144.8) ;

\draw    (250.5,144.8) -- (342.5,195.8) ;

\draw    (324,49) -- (424,149) ;

\draw  [draw opacity=0][fill={rgb, 255:red, 74; green, 144; blue, 226 }  ,fill opacity=0.22 ] (258.68,133.96) .. controls (264.76,136.02) and (268.89,140.11) .. (268.86,144.79) .. controls (268.84,147.81) and (267.09,150.57) .. (264.2,152.67) -- (249.83,144.67) -- cycle ; \draw  [color={rgb, 255:red, 14; green, 110; blue, 223 }  ,draw opacity=1 ] (258.68,133.96) .. controls (264.76,136.02) and (268.89,140.11) .. (268.86,144.79) .. controls (268.84,147.81) and (267.09,150.57) .. (264.2,152.67) ;
\draw  [draw opacity=0][fill={rgb, 255:red, 74; green, 144; blue, 226 }  ,fill opacity=0.22 ] (410.53,156.73) .. controls (408.76,155.19) and (407.61,153.28) .. (407.31,151.11) .. controls (406.73,146.87) and (409.56,142.7) .. (414.29,139.92) -- (425.07,148.67) -- cycle ; \draw  [color={rgb, 255:red, 74; green, 144; blue, 226 }  ,draw opacity=1 ] (410.53,156.73) .. controls (408.76,155.19) and (407.61,153.28) .. (407.31,151.11) .. controls (406.73,146.87) and (409.56,142.7) .. (414.29,139.92) ;

\draw (324,39) node  [align=left] {$A$};
\draw (281,92) node  [align=left] {$a$};
\draw (239,145) node  [align=left] {$B$};
\draw (288,175) node  [align=left] {$b$};
\draw (342,206) node  [align=left] {$C$};
\draw (386,178) node  [align=left] {$c$};
\draw (434,150) node  [align=left] {$D$};
\draw (383,95) node  [align=left] {$d$};
\draw (278,138) node [color={rgb, 255:red, 74; green, 144; blue, 226 }  ,opacity=1 ] [align=left] {$\displaystyle \alpha $};
\draw (397,140) node [color={rgb, 255:red, 74; green, 144; blue, 226 }  ,opacity=1 ] [align=left] {$\displaystyle \beta $};

\end{tikzpicture}
\caption{$ABCD$}\label{fig:ABCD}
\end{figure}

Now, fixing the side lengths consider the figure as a function of the angle $\alpha$. Then from implicit differentiation we have that $2\sin(\alpha)ab = 2\partial_\alpha \beta .cd\sin(\beta)$. Since the total area $A$ is equal to $\frac12(ab\sin(\alpha) + cd\sin(\beta))$  then we have the formula

\begin{equation}\label{eq:darea}
    \begin{split}
        \partial_\alpha A &= \frac12(ab\cos(\alpha) + cd.\partial_\alpha \beta.\cos(\beta)) = \frac{ab}{2}\left(\cos(\alpha) + \frac{\cos(\beta).\sin(\alpha)}{\sin(\beta)}\right)\\
        &= \frac{ab\sin(\alpha+\beta)}{2\sin(\beta)} > 0
    \end{split}
\end{equation}

Now, if we assemble isometrically the triangles $ABC$ and $ACD$ by $AC$ we obtain a quadrilateral with the same side lengths as if we assemble $ABD$ and $BCD$ by $BD$. By the angular triangle inequality the angle at $B$ for $ABC+ACD$ is smaller than the angle for $ABD+BCD$, while for both quadrilaterals the sum of angles at $B$ and $D$ is less than $\pi$. Because of (\ref{eq:darea}) the bigger area corresponds to the bigger angle, so then $|ABC|+|ADC| < |ABD|+|BCD|$

\end{proof}

\begin{defi}
Let $\PP\subset \mathbb{R}^3$ be a set of finite points and $\Sigma$ a surface with a triangulation $T$. We say that a map $f:(\Sigma, T) \rightarrow \mathbb{R}^3$ is a realization of $(\Sigma, T)$ into $\PP$ if it is linear in each triangle and bijects the vertex set $V$ to $\PP$. If the set $\PP$ is taken as a subset of an embedded surface $\Sigma$ we will take $V=\PP$ and $f|_{V}$ as the inclusion map. We say in this case that $\PP$ is a point cloud for $\Sigma$.
\end{defi}

Notice that a realization $f$ of $(\Sigma, T)$ into $\PP$ is determined once a bijection between $V$ and $\PP$ is fixed. Then we can say a realization of $(\Sigma,T)$ into $\PP$ is Delaunay or strict Delaunay two adjacent images triangles are Delaunay or strict Delaunay. We can also perform diagonal switches by changing the triangulation $T$ but preserving the vertex set $V$ and its bijection to $\PP$. Finally, we can calculate the area of a realization ($A(T)$ for short) as the sum of the areas for each flat triangle and the hyperbolic volume of the realization ($vol(T)$) as the sum of hyperbolic volumes defined for each flat triangle. This serves for the following proposition.

\begin{prop}\label{prop:alg}
If there is a realization of $(\Sigma,T)$ into $\PP$, then we can change the triangulation while keeping the vertex set and bijection with $\PP$ so that the realization $(\Sigma, T_0)$ is Delaunay. If no 4 points of $\PP$ are coplanar, then there is a strict Delaunay realization.
\end{prop}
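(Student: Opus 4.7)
The plan is to run the diagonal switch algorithm directly on $(\Sigma,T)$ and force termination via a lexicographic Lyapunov pair built from the total flat area $A(T)$ and the total hyperbolic volume $vol(T)$. At each step, if some interior edge $BD$ bounds two triangles $ABD$, $BCD$ whose images in $\RR^3$ satisfy $\angle BAD+\angle BCD>\pi$, I perform the combinatorial switch $BD\mapsto AC$ on $\Sigma$; this preserves the vertex set $V$ and its bijection with $\PP$, so the result is still a realization of $\Sigma$ into $\PP$ and its Delaunay status is well-defined.

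For termination, I order the finitely many realizations lexicographically by $(A(T),-vol(T))$. Lemma~\ref{lem:area} shows that every switch at a non-Delaunay edge weakly decreases $A(T)$, and strictly unless $A,B,C,D$ are coplanar and form a convex quadrilateral. In that exceptional coplanar case I apply Rivin's planar argument inside the plane through the four points: the two ideal tetrahedra in $\HH^3$ over the Delaunay diagonal have strictly larger total volume than those over the non-Delaunay one, so $vol(T)$ strictly increases while $A(T)$ is unchanged. Thus the lex pair strictly decreases at every switch. Since the number of triangulations of $\Sigma$ with vertex set $V$ is finite (the edge and face counts are pinned by $\chi(\Sigma)$ and each edge is a pair from $V$), no realization can recur, so the algorithm halts after finitely many switches at some Delaunay $T_0$.

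For the strict Delaunay statement, assume that no four points of $\PP$ are coplanar. The observation immediately preceding Lemma~\ref{lem:area} then gives strict angular triangle inequalities, so switching any edge whose opposite angles sum to a value $\geq\pi$ produces an edge that is strictly Delaunay; and Lemma~\ref{lem:area}, with its equality case now ruled out, yields a strict decrease of $A(T)$ at every such step. I therefore run the strengthened algorithm that switches every non-strict edge and invoke the same finiteness argument to obtain a strict Delaunay realization. The only real subtlety in the plan is the coplanar cyclic configuration in the termination argument, where area is neutral and one genuinely needs the hyperbolic-volume tie-breaker supplied by Rivin's computation; once that is in place the two statements follow from Lemma~\ref{lem:area} and the finiteness of the triangulation set.
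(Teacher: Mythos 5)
Your proof is correct and follows essentially the same route as the paper: both terminate the diagonal switch algorithm by observing that each switch strictly decreases the lexicographic pair $(A(T),-vol(T))$ -- using Lemma~\ref{lem:area} for the area and Rivin's hyperbolic-volume computation as the tie-breaker in the coplanar convex case -- and both obtain the strict Delaunay claim from the strict angular triangle inequality for non-coplanar quadruples together with the strict area decrease once the equality case of Lemma~\ref{lem:area} is excluded.
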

\begin{proof}
This follows after observing that a diagonal switch strictly decreases $(A(T),-vol(T))$. Indeed, if the diagonal switch happens among 4 non-coplanar points then the area sum strictly decreases. If the diagonal switch happens for 4 coplanar point the area sum decreases or stays the same, but the hyperbolic volume increases. Given that there are finitely many realizations, the algorithm ends in finite time. The later claim follows from Lemma \ref{lem:area} given that now even when the sum of angle is $\pi$ the diagonal switch strictly decreases area while changing into a Delaunay edge.
\end{proof}

Given the improving nature of this lemma, we will simply say that $\PP$ realizes $\Sigma$, while the triangulation associated will be stated by context.

Another difference with diagonal switches for flat surfaces is that in $\mathbb{R}^3$ a embedded triangulation could end self-intersecting. For instance, take a tetrahedra $ABCD$ where $BD$ is non-Delaunay. After a diagonal switch the figure degenerates into the union of the triangles $ABC, ACD$ with multiplicity $2$. This compels us to request extra properties for a point cloud in order to preserve embeddedness. 

\begin{defi}\label{def:dense}
We say that a point cloud $\PP$ of a surface $\Sigma$ is $\delta$-dense if any ball of radius $\delta$ in $\Sigma$ contains at least one point from $\PP$.
\end{defi}

\begin{defi}\label{def:aflat}
We say that a point cloud $\PP$ of a surface $\Sigma$ is $(\theta,r)$-almost flat if for $r>0$ we have that for any given point $p\in\PP$ there is a plane $L$ such that any $3$ points of $\PP$ in the ball of radius $r$ at $p$ form a plane with angle less that $\theta$ with $L$.
\end{defi}

Note that $\Sigma$ being $C^1$-regular is enough to guarantee that there is a point cloud $\PP$ satisfying Definitions \ref{def:dense} and \ref{def:aflat}. A point cloud will also satisfy these conditions if the error in measure is $C^1$-small.

In the following proposition we see an almost flat set of points stays embedded after diagonal switches.

\begin{prop}\label{localDelaunay} Let $\PP$ be a point cloud for a surface $\Sigma$ and $L$ be a plane in $\mathbb{R}^3$. If every plane made from 3 points of $\PP$ has angle less than $\pi/8$ with $L$, then there is a Delaunay realization of $\Sigma$ into $\PP$ that is an embedding in $\mathbb{R}^3$. This realization is relative to $\partial \Sigma$. 
\end{prop}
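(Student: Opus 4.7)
The plan is to construct an initial embedded realization by projecting orthogonally to $L$ and lifting a planar Delaunay triangulation, then run the 3D diagonal switch algorithm of Proposition \ref{prop:alg}, showing that the $\pi/8$ bound preserves embeddedness throughout.

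Let $\pi_L : \mathbb{R}^3 \to L$ denote orthogonal projection. First I would verify that $\pi_L$ is injective on $\PP$: two points of $\PP$ with equal projection would lie on a line perpendicular to $L$, and any plane through them and a third point of $\PP$ would meet $L$ at angle $\pi/2$, contradicting the hypothesis. Setting $\PP_L = \pi_L(\PP)$, I apply the flat Delaunay theory of Section \ref{sec:Plane} to produce a Delaunay triangulation $T_L$ of $\PP_L$ whose boundary matches the projected $\partial\Sigma$. Lifting $T_L$ combinatorially through $\pi_L^{-1} : \PP_L \to \PP$ and joining vertices by straight segments in $\mathbb{R}^3$ gives an initial realization $f_0 : (\Sigma, T_0) \to \mathbb{R}^3$. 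To see $f_0$ is an embedding, note that $\pi_L \circ f_0$ is the planar realization of $T_L$, which is embedded in $L$; each triangle of $f_0$ lies in a plane of tilt less than $\pi/8 < \pi/2$, so $\pi_L$ is bijective on it; combining, $f_0$ is injective.

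Starting from $f_0$, I run the 3D diagonal switch algorithm; Proposition \ref{prop:alg} guarantees termination at a 3D Delaunay realization after finitely many steps. The main obstacle, and the reason for the specific $\pi/8$ threshold, is showing that each individual switch preserves embeddedness in $\mathbb{R}^3$. When a switch $BD \to AC$ is performed on a quadrilateral $ABCD$, the new triangles $ABC$ and $ACD$ share only the edge $AC$: generically they lie in distinct planes, and otherwise the four points are coplanar so the planar switch analysis applies directly. The serious issue is to exclude intersections with triangles of the realization outside the quadrilateral. Here the $\pi/8$ bound should enter via a Hausdorff-distance estimate: the old PL disk $ABD \cup BCD$ and the new $ABC \cup ACD$ agree at the four vertices, both project into the planar convex hull of $\{A',B',C',D'\}$, and their vertical deviations from $L$ are controlled by the tilt angle, so any triangle previously disjoint from the old disk remains disjoint from the new one. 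Quantifying this discrepancy sharply enough to close the argument is the delicate technical step. Preservation of the boundary edge structure of $\partial\Sigma$ is then automatic, since the diagonal switch algorithm never modifies boundary edges.
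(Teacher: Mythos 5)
Your setup is the same as the paper's: project orthogonally to $L$ (noting injectivity of $\pi_L$ on $\PP$ from the $\pi/8$ hypothesis), triangulate the planar image, and lift to an embedded initial realization. But the heart of the proposition is the step you flag as ``the delicate technical step,'' and there your proposed mechanism does not work and is not the one the paper uses.

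The key gap: you frame the obstruction as a Hausdorff-type estimate (small tilt $\Rightarrow$ small vertical deviation $\Rightarrow$ new PL disk stays disjoint from triangles that were disjoint from the old one). This is not true without further input, and it is not where the danger lies. The actual failure mode is that a diagonal switch $BD \to AC$ can destroy injectivity of $\pi_L$ on the realization: if $A',B',C',D'$ is concave with $D'$ inside triangle $A'B'C'$, then the new projected triangles $A'B'C'$ and $A'C'D'$ overlap (one contains the other), and moreover $A'B'C'$ now also covers the wedge previously occupied by a \emph{different} triangle of the realization. So the old and new PL disks do \emph{not} ``both project into the planar convex hull of $\{A',B',C',D'\}$'' in a way that preserves disjointness of the nearby mesh --- the new projection genuinely double-covers a region that was previously covered by another triangle. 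A smallness-of-deviation bound does not rescue this.

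The paper's proof closes this exactly by establishing an invariant and showing the bad event never happens: the 1-to-1 projection to $L$ is preserved because whenever the projected quadrilateral $A'B'C'D'$ would be concave, the $\pi/8$ hypothesis forces the angular inequalities $\angle DAB \le \angle CAB$ and $\angle DCB \le \angle ACB$ (this is where the specific threshold is spent, via the observation that all planes through 3 points of $\PP$ pairwise differ by less than $\pi/4$, confining $D$ to the bipyramid $ABCEF$). These inequalities give $\angle BAD + \angle BCD < \pi$, so the edge $BD$ is already strictly Delaunay and the dangerous switch is never performed. In other words, the hypothesis is used to prove a combinatorial exclusion, not a metric stability estimate. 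Your proposal, as written, leaves the essential argument unsupplied and points toward an estimate that would not suffice even if made quantitative.

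A smaller point: your observation that the two new triangles $ABC$ and $ACD$ only meet along $\overline{AC}$ is correct (distinct planes intersect in a line, and each triangle meets that line only in the shared segment), but as you note this was never the serious issue.
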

\begin{proof}
Notice that the angle condition implies that the orthogonal projection from $\PP$ to $L$ is $1$-to-$1$. Then triangulate the image $\PP'$ of $\PP$ in $L$ and take the corresponding triangulation for $\PP$. Clearly this triangulation also projects $1$-to-$1$ into $L$, so it is embedded. We will prove that after every diagonal switch, the projection of the new triangulation of $\PP$ still projects $1$-to-$1$ into $L$. This clearly proves embeddedness.

The only possibility for a diagonal switch from $\PP$ to break the $1$-to-$1$ projection property is that in $\PP'$ the corresponding quadrilateral is concave. Take then $4$ points $A, B, C, D$ in $\PP$ such that their projections $A', B', C', D'$ in $L$ form a concave quadrilateral. Assume without loss of generality that $D'$ is contained in the triangle $A'B'C'$. Given that any plane made by points in $\PP$ has an angle less than $\pi/8$ with $L$, then any two different planes made by points in $\PP$ have an angle less than $\pi/4$. Then the line $DD'$ (orthogonal to $A'B'C'$) intersects the plane $L_1$ spanned by $A,B,C$ at an interior point $D_1$ of $ABC$ forming an angle $\alpha$ greater or equal than $\pi/2-\pi/8=3\pi/8$. The angle $\beta$ between $DAB$ and $ABC$ facing $DD_1$ should be either less than $\pi/4$ or greater than $3\pi/4$. Let then $D_0$ be the orthogonal projection of $D$ to $AB$. Since the geometric locus of points $E\in L_1$ such that $DE$ has a given angle with $L_1$ are concentric circles and the angle decreases as the circle increases, then $\beta < \pi/4$. Indeed, if $\beta > 3\pi/4$, the geometric locus of points $E\in L_1$ such that the angle between $DE$ with $L_1$ is at least $\pi-\beta<\pi/4$ is a disk tangent to $AB$ at $D_0$ on the opposite side to $C$. This is a contradiction since the angle $\alpha$ between $DD_1$ and $L_1$ is at least $3\pi/8>\pi/4$ but is at the same side as $C$.

\begin{figure}
    \centering

\tikzset{every picture/.style={line width=0.75pt}} 

\begin{tikzpicture}[x=0.75pt,y=0.75pt,yscale=-1,xscale=1]

\draw  [dash pattern={on 0.84pt off 2.51pt}]  (359,70) -- (361.5,239.8) ;
\draw [shift={(361.5,239.8)}, rotate = 89.16] [color={rgb, 255:red, 0; green, 0; blue, 0 }  ][fill={rgb, 255:red, 0; green, 0; blue, 0 }  ][line width=0.75]      (0, 0) circle [x radius= 3.35, y radius= 3.35]   ;
\draw [shift={(359,70)}, rotate = 89.16] [color={rgb, 255:red, 0; green, 0; blue, 0 }  ][fill={rgb, 255:red, 0; green, 0; blue, 0 }  ][line width=0.75]      (0, 0) circle [x radius= 3.35, y radius= 3.35]   ;
\draw   (206,268) -- (368.5,268.8) -- (449.5,192.8) -- (205.5,267.8) -- cycle ;
\draw  [color={rgb, 255:red, 74; green, 144; blue, 226 }  ,draw opacity=1 ] (209,125) -- (367.5,165.8) -- (443.5,110.8) -- cycle ;
\draw  [draw opacity=0][fill={rgb, 255:red, 74; green, 144; blue, 226 }  ,fill opacity=0.22 ] (344.38,134.52) .. controls (348.01,131.86) and (353.51,130.18) .. (359.65,130.22) .. controls (359.76,130.22) and (359.88,130.22) .. (359.99,130.23) -- (359.58,142.22) -- cycle ; \draw  [color={rgb, 255:red, 74; green, 144; blue, 226 }  ,draw opacity=1 ] (344.38,134.52) .. controls (348.01,131.86) and (353.51,130.18) .. (359.65,130.22) .. controls (359.76,130.22) and (359.88,130.22) .. (359.99,130.23) ;
\draw [color={rgb, 255:red, 208; green, 2; blue, 27 }  ,draw opacity=1 ] [dash pattern={on 4.5pt off 4.5pt}]  (358.67,70) -- (306.5,150.8) ;

\draw  [draw opacity=0][fill={rgb, 255:red, 74; green, 144; blue, 226 }  ,fill opacity=0.22 ] (316.94,134.1) .. controls (319.29,136.16) and (321.16,139) .. (322.28,142.31) -- (307.83,149.66) -- cycle ; \draw  [color={rgb, 255:red, 74; green, 144; blue, 226 }  ,draw opacity=1 ] (316.94,134.1) .. controls (319.29,136.16) and (321.16,139) .. (322.28,142.31) ;
\draw    (307.6,150.6) -- (329.65,139.48) ;

\draw    (338.36,131.58) -- (360.15,142.38) ;

\draw (372.67,66.67) node  [align=left] {$\displaystyle D$};
\draw (198.67,124.33) node [color={rgb, 255:red, 74; green, 144; blue, 226 }  ,opacity=1 ] [align=left] {$\displaystyle A$};
\draw (377,169.67) node [color={rgb, 255:red, 74; green, 144; blue, 226 }  ,opacity=1 ] [align=left] {$\displaystyle B$};
\draw (454,109) node [color={rgb, 255:red, 74; green, 144; blue, 226 }  ,opacity=1 ] [align=left] {$\displaystyle C$};
\draw (193.67,269) node [color={rgb, 255:red, 0; green, 0; blue, 0 }  ,opacity=1 ] [align=left] {$\displaystyle A'$};
\draw (381.67,268) node [color={rgb, 255:red, 0; green, 0; blue, 0 }  ,opacity=1 ] [align=left] {$\displaystyle B'$};
\draw (460.33,189.67) node  [align=left] {$\displaystyle C'$};
\draw (350.2,122.6) node [scale=0.9,color={rgb, 255:red, 74; green, 144; blue, 226 }  ,opacity=1 ]  {$\alpha $};
\draw (120,150) node [scale=0.8] [align=left] {$ $};
\draw (120,131) node [scale=0.8] [align=left] {$ $};
\draw (328.2,130.6) node [scale=0.9,color={rgb, 255:red, 74; green, 144; blue, 226 }  ,opacity=1 ]  {$\beta $};
\draw (372,247) node  [align=left] {$\displaystyle D'$};
\draw (304,163) node [scale=0.9] [align=left] {$\displaystyle D_{0}$};
\draw (371,141.75) node [scale=0.9] [align=left] {$\displaystyle D_{1}$};

\end{tikzpicture}

    \caption{}
    \label{fig:projABC}
\end{figure}

Now let us prove that $\angle DAB \leq \angle CAB$.  Because of the previous paragraph, $D$ belongs to the convex polyhedra with vertices $ABCEF$ (see Figure \ref{fig:ABCEF}), where $E$ and $F$ are found by intersecting the planes containing an edge of $ABC$ and that make a $\pi/4$ angle with $ABC$. Given the symmetries of such planes, $E$ and $F$ are reflection of one another with respect to $ABC$ and their midpoint is the incenter of $ABC$, denoted by $I$.  The result will follow after proving that the angle $\angle DAB$ is maximized (while varying $D$) when $D$ is one of the vertices of $ABCEF$, and then verifying the inequality $\angle DAB \leq \angle CAB$ for $D=A,B,C,E,F$. 

\begin{figure}
    \centering

\tikzset{every picture/.style={line width=0.75pt}} 

\begin{tikzpicture}[x=0.75pt,y=0.75pt,yscale=-1,xscale=1]

\draw   (230.48,106) -- (419.5,187.8) -- (135.54,187.8) -- cycle ;
\draw [line width=0.75]  [dash pattern={on 0.84pt off 2.51pt}]  (250,71) -- (249.5,246.8) ;
\draw [shift={(249.5,246.8)}, rotate = 90.16] [color={rgb, 255:red, 0; green, 0; blue, 0 }  ][fill={rgb, 255:red, 0; green, 0; blue, 0 }  ][line width=0.75]      (0, 0) circle [x radius= 3.35, y radius= 3.35]   ;
\draw [shift={(250,71)}, rotate = 90.16] [color={rgb, 255:red, 0; green, 0; blue, 0 }  ][fill={rgb, 255:red, 0; green, 0; blue, 0 }  ][line width=0.75]      (0, 0) circle [x radius= 3.35, y radius= 3.35]   ;
\draw [color={rgb, 255:red, 208; green, 2; blue, 27 }  ,draw opacity=1 ]   (250,71) -- (135.86,187.54) ;

\draw [color={rgb, 255:red, 208; green, 2; blue, 27 }  ,draw opacity=1 ] [dash pattern={on 4.5pt off 4.5pt}]  (250.13,70.88) -- (230.63,105.38) ;

\draw [color={rgb, 255:red, 208; green, 2; blue, 27 }  ,draw opacity=1 ]   (249.89,71.42) -- (419,188.04) ;

\draw [color={rgb, 255:red, 208; green, 2; blue, 27 }  ,draw opacity=1 ]   (135.14,187.86) -- (249.57,246.69) ;

\draw [color={rgb, 255:red, 208; green, 2; blue, 27 }  ,draw opacity=1 ]   (249.57,246.97) -- (419,187.54) ;

\draw [color={rgb, 255:red, 208; green, 2; blue, 27 }  ,draw opacity=1 ] [dash pattern={on 4.5pt off 4.5pt}]  (230.64,106.56) -- (249.5,246.8) ;

\draw (125,186) node  [align=left] {$\displaystyle A$};
\draw (428,186) node  [align=left] {$\displaystyle B$};
\draw (241,100) node  [align=left] {$\displaystyle C$};
\draw (262,68) node  [align=left] {$\displaystyle E$};
\draw (259,235) node  [align=left] {$\displaystyle F$};

\draw (262,154) node  [align=left] {$\displaystyle I$};

\end{tikzpicture}

    \caption{}
    \label{fig:ABCEF}
\end{figure}

Given a segment $AB$, the geometric locus for $\angle DAB = const.$ are one end cones at $A$ with axis $AB$, without the cone vertex $A$. Because of these level sets a ray $\overrightarrow{AD}$ maximizing $\angle DAB$ (for $D\in ABCEF$) has to lie on a face containing $A$. If $\overrightarrow{AD}$ does not contain neither $B,C,D$ nor $E$, then the face of $ABCEF$ that contains $\overrightarrow{AD}$ has to be tangent (along $\overrightarrow{AD}$) to the cone defined by rotating $\overrightarrow{AD}$ using $AB$ as an axis. This implies that $DAB$ and the face of $ABCEF$ containing $\overrightarrow{AD}$ are orthogonal, but by construction each of them make as an angle less than $\pi/4$ with $ABC$, which is a contradiction.

Since the inequality $\angle DAB \leq \angle CAB$ follows easily when $D$ is either $B$ or $C$, we need to only check for $E$ or $F$, and those cases are analogous. Name $E'$ the projection of $E$ to $AB$ (see Figure \ref{fig:tan}). Then $\tan(\angle EAB) = \frac{\ell(EE')}{\ell(AE')} = \sec(\pi/4).\frac{\ell(IE')}{\ell(AE')} =  \sqrt{2} \tan (\angle CAB/2)$, so by using the double angle formula follows that $\angle EAB \leq \angle CAB$ as an easy exercise. By an analogous procedure we have that $\angle DIJ \leq \angle KIJ$, for distinct $I,J,K\in\lbrace A, B, C \rbrace$.

\begin{figure}
    \centering

\tikzset{every picture/.style={line width=0.75pt}} 

\begin{tikzpicture}[x=0.75pt,y=0.75pt,yscale=-1,xscale=1]

\draw   (230.48,106) -- (419.5,187.8) -- (135.54,187.8) -- cycle ;
\draw [color={rgb, 255:red, 208; green, 2; blue, 27 }  ,draw opacity=1 ]   (250,71) -- (135.86,187.54) ;

\draw [color={rgb, 255:red, 208; green, 2; blue, 27 }  ,draw opacity=1 ] [dash pattern={on 4.5pt off 4.5pt}]  (250.13,70.88) -- (230.63,105.38) ;

\draw [color={rgb, 255:red, 208; green, 2; blue, 27 }  ,draw opacity=1 ]   (249.89,71.42) -- (419,188.04) ;

\draw  [dash pattern={on 0.84pt off 2.51pt}]  (249.89,71.56) -- (249.8,151.66) ;

\draw  [dash pattern={on 4.5pt off 4.5pt}]  (135.33,187.73) -- (250.13,148.45) ;

\draw  [draw opacity=0][fill={rgb, 255:red, 74; green, 144; blue, 226 }  ,fill opacity=0.22 ] (153.91,172.37) .. controls (164.94,175.55) and (172.49,181.38) .. (173.03,188.07) -- (135.07,188.66) -- cycle ; \draw  [color={rgb, 255:red, 74; green, 144; blue, 226 }  ,draw opacity=1 ] (153.91,172.37) .. controls (164.94,175.55) and (172.49,181.38) .. (173.03,188.07) ;
\draw  [draw opacity=0][fill={rgb, 255:red, 208; green, 2; blue, 27 }  ,fill opacity=0.13 ] (167.11,155.58) .. controls (187.05,161.85) and (200.5,173.8) .. (200.42,187.42) .. controls (200.42,187.49) and (200.42,187.55) .. (200.42,187.62) -- (135.75,187.01) -- cycle ; \draw  [color={rgb, 255:red, 208; green, 2; blue, 27 }  ,draw opacity=1 ] (167.11,155.58) .. controls (187.05,161.85) and (200.5,173.8) .. (200.42,187.42) .. controls (200.42,187.49) and (200.42,187.55) .. (200.42,187.62) ;
\draw  [dash pattern={on 4.5pt off 4.5pt}]  (249.8,148.3) -- (280.2,187.7) ;

\draw  [dash pattern={on 0.84pt off 2.51pt}]  (249.85,71.66) -- (280.33,187.44) ;

\draw [color={rgb, 255:red, 189; green, 16; blue, 224 }  ,draw opacity=1 ]   (250.11,138.33) -- (240.28,140.97) -- (240.28,151.97) ;

\draw [color={rgb, 255:red, 189; green, 16; blue, 224 }  ,draw opacity=1 ]   (278.22,178.88) -- (267.78,178.66) -- (270,187.77) ;

\draw [color={rgb, 255:red, 189; green, 16; blue, 224 }  ,draw opacity=1 ]   (275.78,181.99) -- (257.56,181.99) -- (262,187.77) ;

\draw  [draw opacity=0][fill={rgb, 255:red, 74; green, 144; blue, 226 }  ,fill opacity=0.22 ] (269.94,174.87) .. controls (271.98,173.99) and (274.25,173.37) .. (276.67,173.04) -- (280.53,187.64) -- cycle ; \draw  [color={rgb, 255:red, 74; green, 144; blue, 226 }  ,draw opacity=1 ] (269.94,174.87) .. controls (271.98,173.99) and (274.25,173.37) .. (276.67,173.04) ;
\draw [color={rgb, 255:red, 189; green, 16; blue, 224 }  ,draw opacity=1 ]   (249.7,138.5) -- (256.08,146.97) -- (256.08,155.97) ;

\draw (125,186) node  [align=left] {$\displaystyle A$};
\draw (428,186) node  [align=left] {$\displaystyle B$};
\draw (241,100) node  [align=left] {$\displaystyle C$};
\draw (262,68) node  [align=left] {$\displaystyle E$};
\draw (167.79,167.64) node [scale=0.9,color={rgb, 255:red, 74; green, 144; blue, 226 }  ,opacity=1 ,xslant=0.31] [align=left] {$\displaystyle \alpha $};
\draw (198.92,155.83) node [color={rgb, 255:red, 208; green, 2; blue, 27 }  ,opacity=1 ] [align=left] {$\displaystyle \beta $};
\draw (180.04,178.39) node [scale=0.9,color={rgb, 255:red, 74; green, 144; blue, 226 }  ,opacity=1 ,xslant=0.31] [align=left] {$\displaystyle \alpha $};
\draw (297.33,172.67) node [scale=1,color={rgb, 255:red, 74; green, 144; blue, 226 }  ,opacity=1 ] [align=left] {$\displaystyle \pi /4$};
\draw (283,198) node  [align=left] {$\displaystyle E'$};
\draw (261,138) node  [align=left] {$\displaystyle I$};

\end{tikzpicture}

    \caption{}
    \label{fig:tan}
\end{figure}

Recall now that in order to do a diagonal switch the two opposite angles to the erased diagonal need to add more than $\pi$. But because of the previous paragraph, each of them adds to less than two of the interior angles of $ABC$. Since this sum is strictly less than $\pi$, we have a contradiction.
\end{proof}

Now let us produce a global Delaunay triangulation by applying Proposition \ref{localDelaunay} locally and showing a control in edge length to complete the local-to-glocal argument.

\begin{theorem}\label{thm:global}
Let $\PP$ be a point cloud of a surface $\Sigma$ that is $(\pi/16,r)$-almost flat for some $r>0$. There is $\delta>0$ sufficiently small with respect to $r$ such that if $\PP$ is $\delta$-dense then there is a Delaunay triangulation of $\PP$ that is an embedding into $\RR^3$. Such Delaunay triangulation is obtained by diagonal switches. 
\end{theorem}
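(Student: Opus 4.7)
The plan is to combine the termination guarantee of the diagonal switch algorithm (Proposition \ref{prop:alg}) with the local embeddedness result (Proposition \ref{localDelaunay}) via a locality argument: I will choose $\delta$ small enough that every diagonal switch performed by the algorithm involves four points lying in a single almost-flat ball $B(p,r)$ where Proposition \ref{localDelaunay}'s hypothesis is satisfied. The $\pi/16$-almost flat hypothesis already supplies a uniform $r>0$ so that for each $p\in\PP$ the ball $B(p,r)\subset\RR^3$ admits a reference plane $L_p$ with every three-point plane of $\PP\cap B(p,r)$ making angle at most $\pi/16<\pi/8$ with $L_p$, exactly matching Proposition \ref{localDelaunay}. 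Taking $\delta$ much smaller than $r$, I construct an initial embedded realization $T_0$ of $(\Sigma,T_0)$ into $\PP$ with all edges of length $O(\delta)$, either by orthogonally projecting each patch $\PP\cap B(p,r)$ onto $L_p$ (a one-to-one map by the almost-flat hypothesis), applying the planar Delaunay construction of Section \ref{sec:Plane}, and patching along overlaps, or by exploiting the $C^2$ regularity to triangulate a coordinate chart. Running the algorithm on $T_0$, Proposition \ref{prop:alg} guarantees termination at a Delaunay realization $T$ in finitely many steps.

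It remains to show that every intermediate realization is embedded. I would maintain, by induction on the number of switches, the invariant that every edge of the current realization has length at most $r/3$. Under this invariant, any quadrilateral $ABCD$ involved in a candidate switch has $\text{diam}\{A,B,C,D\}<r$, so all four points lie in some ball $B(p,r)$; the argument of Proposition \ref{localDelaunay} (which rules out the only obstruction to embeddedness, namely a non-Delaunay edge sitting in a concave projected quadrilateral) then applies verbatim, and the switch preserves the one-to-one projection onto $L_p$, hence embeddedness. The new edge satisfies $|AC|\leq |AB|+|BC|\leq 2\max_{\text{old sides}}$ by the triangle inequality, which supplies one step of propagation of the edge bound.

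The main obstacle is closing the loop on this invariant. A crude doubling gives $|AC|\leq 2^N\cdot O(\delta)$ after $N$ switches, but $N$ grows with $|\PP|$ (hence with $\delta^{-1}$), so one cannot simply pick $\delta$ after $N$. The right fix should come from a finer analysis using Lemma \ref{lem:area}: the strict area drop per switch can be quantified in terms of the edge lengths and the excess $\angle BAD+\angle BCD-\pi$, and combined with the universal upper bound $A(T)\leq A(T_0)=O(\mathrm{Area}(\Sigma))$ this should yield an edge-length bound depending only on the geometry of $\Sigma$ and on $r$, not on $N$; then $\delta$ can be chosen as a universal fraction of $r$. Finally, preservation of the edge structure on $\partial\Sigma$ is automatic, since boundary edges are adjacent to a single triangle and thus never participate in a diagonal switch.
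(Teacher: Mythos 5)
Your outline has the right ingredients (Proposition \ref{localDelaunay} for embeddedness, Proposition \ref{prop:alg} for termination, a smallness relation between $\delta$ and $r$), but it has a genuine gap, which you yourself flag, and your proposed fix is not the one that works. The crude doubling bound $\ell\le 2^N\cdot O(\delta)$ cannot close the induction since $N$ grows with $\delta^{-1}$, and the replacement you sketch---quantifying the area drop from Lemma \ref{lem:area} against the total area budget---does not obviously succeed: the per-switch area decrement is controlled by the angle excess $\angle BAD+\angle BCD-\pi$, which can be arbitrarily small (near-cyclic quadrilaterals), so the finite area budget does not bound the number of switches; and even a bound on total area does not bound the length of an individual edge. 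As written, the inductive invariant ``all edges $\le r/3$'' is not maintained.

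The paper closes the gap in a different and more robust way. Instead of running the switch algorithm globally with a per-step edge-length invariant, it runs it \emph{locally}: choose $r$ so every $2r$-disk in $\Sigma$ is $\pi/16$-almost flat, form a patch $T$ with $\DD_r\subseteq T\subseteq\DD_{r+\epsilon}$ from the current triangulation, and do switches relative to $\partial T$ until $(T,\partial T)$ is Delaunay. Because all vertices involved in those switches lie in a single almost-flat patch, Proposition \ref{localDelaunay} applies to \emph{every} intermediate triangulation, so embeddedness never depends on controlling edge lengths during the switches. Edge-length control is instead obtained \emph{a posteriori} at the local Delaunay equilibrium: a longest edge $\overline{AB}$ with $\ell(\overline{AB})>\epsilon$ produces a Delaunay triangle whose circumscribed disk has radius $>\epsilon/2$ and is vertex-free, and pushing forward by the bi-Lipschitz orthogonal projection of (\ref{eq:Lip}) one finds a region of area $\gtrsim\epsilon^2$ in $\Sigma$ with no points of $\PP$, contradicting $\delta$-density for $\delta$ small. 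This yields $\ell\le\epsilon$ for edges with both endpoints in $\DD_{r-\epsilon}$ and $\ell\le 2\epsilon$ in the boundary ring. The global statement then follows by iterating over maximal disjoint families of $r$-disks, tracking the $O(\epsilon)$-wide rings where the bound degrades to $2\epsilon$ or $4\epsilon$ and re-covering them in the next pass; the whole process terminates because $(A(T),-\operatorname{vol}(T))$ strictly decreases over finitely many configurations. In short, the two ideas you are missing are: (a) do the switches patch-by-patch so that Proposition \ref{localDelaunay}'s hypothesis holds unconditionally for every intermediate triangulation, rather than trying to maintain a length invariant step-by-step; and (b) derive edge-length bounds from the empty-circumscribed-disk property of Delaunay triangles together with $\delta$-density, not from a per-switch area decay estimate.
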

\begin{proof}
Assume for now that we start with a triangulation where all edges have length less than a given small constant $\epsilon\ll 1$. With some of the setup we will develop, this step will be easier to explain at the end.

\textit{Step 1. Local modification}\\\noindent
Our goal is to setup a local Delaunay triangulation in a given disk neighbourhood of controlled size. Without loss of generality let us assume that every disk of radius $2r$ in $\Sigma$ is $(\pi/16,r)$ almost flat. Assume also $\epsilon$ small enough so that $\epsilon < \frac{r}{1000}$. Given a closed disk $\DD(r)$ of radius $r$, take all the triangles that intersect $\DD$ and denote by $T_0$ the piecewise flat surface obtained by the union of those triangles. Let $T$ be the union of $T_0$ with all the triangulated planar components of its complement $T_0^C$ (i.e. with $0$ genus) and $V$ its vertices set. $T$ only ommits the component of $T_0^C$ that contains $\DD_{2r}^C$. One can observe that $T$ is a topological disk, $\DD_r \subseteq T\subseteq\DD_{r+\epsilon}$ and $\partial T \subset \DD_{r+\epsilon}\setminus \DD_r$. Let us then do the diagonal switches in $T$ until $(T,\partial T)$ is Delaunay. By Proposition \ref{localDelaunay} this new triangulation is an embedding, where the projection to the reference plane is 1-to-1.

\textit{Step 2. Bounding the largest edge}\\\noindent
Let us analyze how we could obtain an edge of length at least $\epsilon$. We will prove that any new edge has length at most $2\epsilon$, and any new edge at least one vertex in $\mathcal{D}_{r-3\epsilon}$ has length at most $\epsilon$. The constrain will come from assuming that there is a long edge and then produce a disk of area $A(\epsilon)> \pi\delta^2$ without vertices, which contradicts the $\delta$-density. Let then $\overline{AB}$ be the longest edge of the Delaunay triangulation of $(T, \partial T)$. Since the switches were done relative to $\partial T$ where length is already bounded by $\epsilon$, let us assume that $\overline{AB}$ is an interior edge.

For this step, our claim is that we can select $\delta>0$ small enough so that:

\begin{enumerate}
\item\label{1stclaim} If $A, B$ belong to $\DD_{r-\epsilon}$ then $\ell(\overline{AB})\leq \epsilon$
\item\label{2ndclaim} If either $A,B$ belong to $T\setminus \DD_{r-\epsilon}\subseteq\DD_{r+\epsilon}\setminus\DD_{r-\epsilon}$ then $\ell(\overline{AB}) \leq 2\epsilon$
\end{enumerate}
To prove (\ref{1stclaim}), there are vertices $C,D$ such that $CAB, DAB$ are part of the triangulation and satisfy the Delaunay condition. Assume without lost of generality that $\angle ACB \leq \angle ADB$, and since their sum is less than $\pi$, that $\angle ACB \leq \pi/2$. The disk $\Delta\subset (T,\partial T)$ circumscribed to $ABC$ does not contain vertices from $T$ and has radius at least $\ell(\overline{AB})/2 > \epsilon/2$, so area at least $\pi\ell(\overline{AB})^2/4 > \pi\epsilon^2/4$. $V$ is a point cloud $(\pi/8,r)$ almost flat with respect to the plane $P$ defined by $ABC$, so by Proposiion \ref{localDelaunay} the orthogonal projection $f:(T,V)\rightarrow P$ bijects $(T,V)$ to a triangulated flat region $(T_0,V_0)$. The orthogonal projection $f$ is then a bi-Lipschitz map satisfying

\begin{equation}\label{eq:Lip}
    \cos(\pi/8)d(x,y) \leq d_0(f(x),f(y)) \leq d(x,y),
\end{equation}
where $d$ is the distance of the cone flat surface $(T,V)$ and $d_0$ is the flat distance of $P$. This follows easily for each triangle of $T$ and is then extended by triangle inequality.

Now, the center $O_1$ of $\Delta$ is in $ABC$ because $\angle ACB \leq \pi/2$ and $AB$ is the longest side. Since $f$ is the identity in $ABC$ and  satisfies (\ref{eq:Lip}), we define the disk $\Delta_1$ in $T_0$ of center $f(O_1)=O_1$ and radius $\cos(\pi/8)\epsilon/2$ does not contain any vertex from $V_0$. the disk $\Delta_1$ is contained in $\DD_r\subseteq T$, so then it is clear that the area of $\Delta_1\cap T$ has a lower bound of the form $K\epsilon^2$ for $K$ constant independent of $\epsilon$.

For the proof of (\ref{2ndclaim}) we proceed the same as in (\ref{1stclaim}) up until the definition of $\Delta_1$, where now we can take $\Delta_1$ to have radius $\cos(\pi/8)2\epsilon/2 = \cos(\pi/8)\epsilon$ by assuming $\ell(\overline{AB})\geq 2\epsilon$. In $(\ref{2ndclaim})$ we cannot assume anymore that $\Delta_1$ is contained in $\DD_r\subseteq T$. Nevertheless, $\Delta_1\cap T$ is obtained by carving out $\Delta_1$ by the segments of $\partial T$, none of which can have their vertices in $\Delta_1$ and all with length less than $\epsilon$. Since the diameter of $\Delta_1$ is $2\cos(\pi/8)\epsilon>\epsilon$, then the smaller disk of radius $\epsilon\sqrt{\cos^2(\pi/8)-1/4}$ and center $O_1$ is contained in $T$.

Observe then that we preserve the $\epsilon$ bound on edges for vertices in $\DD_{r-3\epsilon}$ since they cannot be joined by an edge of length $2\epsilon$ to the exterior of $\DD_{r-\epsilon}$. The potential longer edges could be created in the ring $\DD_{r+\epsilon}\setminus \DD_{r-3\epsilon}$ of width $4\epsilon$.

\textit{Step 3.  How to concatenate various local modifications}\\\noindent One of the problems is that we can gain length in the ring $\DD_{r+\epsilon}\setminus \DD_{r-3\epsilon}$ is such a way that the upper bound piles up until becomes unmanageable. Hence, we need to select how to concatenate local modifications so that the upper bound stays controlled.

Given this information, proceed as follows: at each step take a maximal disjoint family of disk of radius $r$ and perform the Delaunay switches in all of them at the same time. After step one we are left with rings of width $4\epsilon$ where edges lengths are at most $2\epsilon$. Edge lengths are at most $\epsilon$ elsewhere. Pick now a new maximal family of $r$-disks and perform Delaunay switches in all of them at the same time. Note that while in the new rings of $8\epsilon$ width the length is only bounded by $4\epsilon$, in the interior disk the length bound is $\epsilon$. This is because for the proof of (\ref{1stclaim}) we didn't use the bound for edge lengths in $\partial T$. Moreover, because of the proof of (\ref{2ndclaim}), if for nearby edges the length bound is $\eta<2\epsilon$, we can improve the $4\epsilon$ bound to a $2\eta$ bound. Then it is only around the intersections of the $4\epsilon$ widths rings and $8\epsilon$ width rings where the edge length bound is $4\epsilon$, since we can bound by $2\epsilon$ in the other region of the $8\epsilon$ rings. Now for the third family of $r$-disk, we make sure that the region with bound $4\epsilon$ is in the interior where we will get an $\epsilon$ bound. After every step there will be $4\epsilon$ wide rings where the best bound is $2\epsilon$, except for their intersection with $8\epsilon$ wide rings where the best bound is $4\epsilon$. Then we make sure to include this regions in the new interiors for the next family. Since the edge length stays bounded by $4\epsilon$, there is an $r$ that works for that value and assures that we can keep applying this procedure.

Under this process the triangulation stays embedded and strictly decreases $(Area,-Vol)$. Since there are finitely many configurations, we should arrive to a local minimum in finite time. This triangulation will be Delaunay.

In order to complete the proof, we need to show that we have an initial triangulation with small edge lengths. Decompose $\Sigma$ by taking $\mathcal{S}$, the union of some segments between points of $\PP$, such that:

\begin{enumerate}
    \item Each segment in $\mathcal{S}$ has length less than $\epsilon$
    \item\label{interiorcondition} $\mathcal{S}$ can be decomposed into closed curves $\lbrace\gamma_j\rbrace$, such that each closed curve $\gamma_j$ belongs to the $r$ neighbourhood of a point in $\Sigma$.
\end{enumerate}
This is possible since by $\delta$-density we can approximate any path in $\Sigma$ by a polygonal path with vertices in $\PP$ with small edge lengths if $\delta$ is small.

Then using each $\gamma_j$ as relative boundary, we can take the Delaunay triangulation for the points projected to its interior in condition \ref{interiorcondition}. By the discussion in Step 2, the interior edges will be as small as required by taking $\delta$ small. And since we already took care of the relative boundaries, then we produce a triangulation with small edge lengths.

\end{proof}

In the $\mathbb{R}^2$ case, we have that the smallest angle is non-decreasing between Delaunay switches. One can ask the same question for surfaces in $\mathbb{R}^3$. Unfortunately, we have the following counter-example to such claim.

\textbf{Example 1} (Figure \ref{fig:pointy}) Let $ABCD$ be a parallelogram with, $BC$ parallel to $AD$, $|AB|=|CD|$, $\angle ABD > \pi/2$ and the smallest angle is given by $\alpha = \angle BAD = \angle CBD = \angle ACB = \angle ADB$ and no other angles. Observe that in particular $A, B, C, D$ are cyclic. Now, rotate the triangle $ABC$ with respect to $AC$ a small angle (and keeping labels to simplify notation). This modification strictly decreases the length of $BD$, while all other lengths are preserved. Since $\angle ABD > \pi/2$, this implies that the angle $\angle BDA$ will decrease to a value $\beta < \alpha$. In other words, the smallest angle made by the diagonal $BD$ ($\beta$) is strictly smaller than the smallest angle of the diagonal $AC$ ($\alpha$). On top of that, since $BD$ decreased, then the sum of its opposite angles is less than $\pi$. While the sum of opposite angles of $AC$ is $\pi$ (since it has the same angles and lengths as the original quadrilateral $ABCD$), we can do a final modification so that $AC$ is non-Delaunay. This final step is small enough so all angles vary in a controlled way, so that we still have $\beta< \alpha$. For such example, the strict Delaunay diagonal switch replaces $AC$ by $BD$, decreasing the smallest angle from $\alpha$ to $\beta<\alpha$.

\begin{figure}[hbt!]
    \centering
    \tikzset{every picture/.style={line width=0.75pt}} 

\begin{tikzpicture}[x=0.75pt,y=0.75pt,yscale=-1,xscale=1]

\draw   (35.5,124) -- (82.31,58) -- (235.69,58) -- (282.5,124) -- cycle ;
\draw    (346.5,124) -- (591.5,125) -- (546.5,66) -- (411.5,70) -- cycle ;

\draw [color={rgb, 255:red, 74; green, 144; blue, 226 }  ,draw opacity=1 ]   (346.5,124) -- (546.5,66) ;

\draw [color={rgb, 255:red, 208; green, 2; blue, 27 }  ,draw opacity=1 ] [dash pattern={on 0.84pt off 2.51pt}]  (411.5,70) -- (591.5,125) ;

\draw [color={rgb, 255:red, 74; green, 144; blue, 226 }  ,draw opacity=1 ]   (35.5,124) -- (235.69,58) ;

\draw [color={rgb, 255:red, 208; green, 2; blue, 27 }  ,draw opacity=1 ]   (82.31,58) -- (282.5,124) ;

\draw  [draw opacity=0][fill={rgb, 255:red, 155; green, 155; blue, 155 }  ,fill opacity=1 ] (252.52,125) .. controls (252.51,124.67) and (252.5,124.33) .. (252.5,124) .. controls (252.5,120.43) and (253.12,117.01) .. (254.27,113.84) -- (282.5,124) -- cycle ; \draw   (252.52,125) .. controls (252.51,124.67) and (252.5,124.33) .. (252.5,124) .. controls (252.5,120.43) and (253.12,117.01) .. (254.27,113.84) ;
\draw  [draw opacity=0][fill={rgb, 255:red, 155; green, 155; blue, 155 }  ,fill opacity=1 ] (561.5,125) .. controls (561.5,121.8) and (562,118.71) .. (562.93,115.82) -- (591.5,125) -- cycle ; \draw   (561.5,125) .. controls (561.5,121.8) and (562,118.71) .. (562.93,115.82) ;

\draw (30,127) node   [align=left] {A};
\draw (338,125) node   [align=left] {A};
\draw (70,52) node   [align=left] {B};
\draw (401,62) node   [align=left] {B};
\draw (246,53) node   [align=left] {C};
\draw (562,57) node   [align=left] {C};
\draw (291,124) node   [align=left] {D};
\draw (603,124) node   [align=left] {D};
\draw (236,118) node   [align=left] {$\displaystyle \alpha $};
\draw (539,118) node   [align=left] {$\displaystyle \beta $};

\end{tikzpicture}
    \caption{}
    \label{fig:pointy}
\end{figure}\noindent
Notice that since the perturbations where small and $A, B, C, D$ were coplanar, we have counter-examples for configurations as close to planar as we want. Hence the step decrease of the smallest angle cannot be ruled out by having a fine approximation of the surface alone.

Finally, and as another counterpoint, let us see an example where a Delaunay triangulation produces an arbitrary large proportion of very-thin triangles. 

\textbf{Example 2} Note first that the hyperbolic convex hull $\CC$ of a triangle $ABC\subset\mathbb{R}^2\subset\partial\mathbb{H}^3$ is described as follows. Let $A', B', C'$ be the points of tangency of the incircle with the triangle $ABC$. Above the triangular region $A'B'C'$, $\partial\CC$ coincides with the hyperplane with boundary equal the incircle. In the remaining region, $\partial\CC$ is formed by the geodesics lines whose projection to $\mathbb{R}^2$ are orthogonal to the nearest bisector. Then $\CC$ is the upper region determined by $\partial \CC$.

For a triangulation $T$, take $ABC$ a triangle on it and the tangency points $A', B', C'$ of the incircle, and denote by $W$ the set of points $A', B', C'$ while considering each possible triangle in $T$. Fix $3\epsilon$ a lower bound for the distances between points of $W$ and points in $V$, the vertex set of $T$. Given $n\gg0$, for each edge $AB$ in $T$ take the two points $X, Y$ at distance $\epsilon$ from either $A$ or $B$ and subdivide $XY\subset AB$ in $n$ segments of equal length, and name $\overline{W}$ the set of points obtained while doing this for each edge in $T$. In each triangle $ABC$ of $T$ take the Delaunay triangulation of $A, B, C,\overline{W}\cap ABC$. This triangulation will have an interior triangle approximating the triangle $A'B'C'$, very-thin triangle with arbitrarily small angle facing and edge of $ABC$ and isosceles corners. This follows from the hyperbolic convex hull set described previously. While assembling the triangles of $T$ we see that opposite angles are all acute, so they satisfy the strict Delaunay condition. Nevertheless, the proportion of thin triangles goes to $1$ as $n\rightarrow\infty$. This could be solved by considering a point cloud that equidistributes.

\bibliographystyle{amsalpha}
\bibliography{mybib}
\end{document}